\documentclass[11pt]{article}
\usepackage[T1]{fontenc}
\usepackage[utf8]{inputenc}
\usepackage[margin=1.08in]{geometry}
\usepackage{amsmath,amssymb,amsthm,mathtools}
\usepackage{enumitem}
\usepackage{xcolor}
\usepackage[colorlinks=true,linkcolor=blue,citecolor=blue,urlcolor=blue]{hyperref}
\usepackage[nameinlink,noabbrev,capitalize]{cleveref}
\usepackage[symbol]{footmisc}
\numberwithin{equation}{section}
\counterwithout{equation}{section}
\newtheorem{theorem}{Theorem}[section]
\newtheorem{problem}[theorem]{Problem}
\newtheorem{corollary}[theorem]{Corollary}
\newtheorem{lemma}[theorem]{Lemma}
\newcommand{\Z}{\mathbb Z}
\newcommand{\cT}{\mathcal T}
\newcommand{\doi}[1]{\href{https://doi.org/#1}{doi:#1}}

\title{A note on zero-sum Ramsey numbers of complete graphs modulo 3}
\author{Cheng Chi$^\ast$ 
\and Jialin He$^\dagger$
\and Fuhong Ma$^\ddagger$}
\date{}

\begin{document}
\maketitle
\footnotetext[1]{School of Mathematical Sciences, Shanghai Jiao Tong University, 800 Dongchuan Road, Shanghai 200240, China.
    Email: chengchi@sjtu.edu.cn. Supported by National Key R\&D Program of China under grant No. 2022YFA1006400 and National Natural Science Foundation of China No. 12571376.}

\footnotetext[2]{School of Mathematical Sciences, Key Laboratory of MEA (Ministry of Education) and Shanghai Key Laboratory of PMMP, East China Normal University, Shanghai 200241, China.
    Email: jlhe@math.ecnu.edu.cn. Supported in part by Science and Technology Commission of Shanghai Municipality No. 22DZ2229014.}
    
\footnotetext[3]{Corresponding author.School of Mathematics and Statistics, Shandong University of Technology, Zibo 255000, China. Email: mafuhongsdnu@163.com. Supported by NNSF of China (No.12401458) and NSF of Shandong Province (No.ZR2024QA239). 
}

\begin{abstract}
For a graph $H$ with $3\mid e(H)$, the zero-sum Ramsey number $R(H,\Z_3)$ is the least integer $N$ such that every labeling of the edges of $K_N$ by elements of $\Z_3$ contains a copy of $H$ whose edge labels sum to zero.
We determine the last previously unresolved infinite family in the complete-graph case modulo $3$. More precisely, 
we prove that 
\(R(K_n,\Z_3)=n+3\) 
for every $n\ge 10$ satisfying $n\equiv 1\pmod 3$.  
Consequently, for $k\ge 1$,
\(R(K_{9k+7},\Z_3)=9k+10\),
resolving a problem of Caro and Mifsud. 


\end{abstract}

\maketitle

\section{Introduction}
Classical Ramsey theory seeks monochromatic substructures in edge-colored complete graphs. Zero-sum Ramsey theory serarch for subgraphs whose labels sum to zero in 
edge-colored complete graphs with colors of a finite group. 
The subject was introduced by Bialostocki and  Dierker~\cite{BialostockiDierker1990}, partly as a graph-theoretic counterpart of the Erd\H{o}s--Ginzburg--Ziv
theorem~\cite{ErdosGinzburgZiv1961}; see Caro's survey~\cite{Caro1996} for an overview and Caro and Mifsud~\cite{CaroMifsud2026} for a recent systematic
study modulo~$3$.

Let $H$ be a finite simple graph and let $q\ge 2$ be an integer such that $q\mid e(H)$.  The zero-sum Ramsey number $R(H,\Z_q)$ is the least integer $N$ such that every edge labeling
\(w\colon E(K_N)\longrightarrow \Z_q\) of $K_N$
contains a copy $H'$ of $H$ satisfying
\(\sum_{e\in E(H')} w(e)=0\) in \(\Z_q.\)
The divisibility assumption implies that a monochromatic copy of $H$ is automatically zero-sum. Therefore, the classical multicolor Ramsey theorem ensures that $R(H,\Z_q)$ is finite.

We focus on complete graphs over $\Z_3$. The parameter $R(K_n,\Z_3)$ is admissible precisely when
\(3\mid \binom n2,\) equivalently, \(n\equiv 0,1\pmod 3.\)
Early general results for complete graphs were obtained by Caro~\cite{Caro1992} and by Alon and Caro~\cite{AlonCaro1993}. The exact-value problem then developed through a sequence of small cases and infinite congruence classes. Chung and Graham's theorem in ~\cite{ChungGraham1983} on precisely colored triangles gives $R(K_3,\Z_3)=11$. 
Harborth and Piepmeyer~\cite{HarborthPiepmeyer1994} proved
\(R(K_4,\Z_3)=7\)
and
\(R(K_6,\Z_3)=10.\)
They~\cite{HarborthPiepmeyer1996} subsequently established
\(R(K_n,\Z_3)=n+3\)
for \(n\ge 4\) with \(n\equiv 1,4\pmod 9,\)
and
\(R(K_n,\Z_3)=n+4\)
for \(n\ge 9\)  with \(n\equiv 0\pmod 9\). 
Caro later in~\cite{Caro1997} proved that
\(R(K_n,\Z_3)=n+4\)
for every $n\ge 6$ with \(n\equiv 0\pmod 3,\)
and also determined the exceptional small value
\(R(K_7,\Z_3)=11.\)

Consequently, before the present work, the exact value was known for every nontrivial admissible complete graph except the family
\( n\equiv 7\pmod 9\) for \(n\ge 16.\)
Caro and Mifsud singled this out as the sole remaining complete-graph problem modulo~$3$ (see~\cite[Problem~3]{CaroMifsud2026}).
\begin{problem}
Determine for $n\equiv 7\pmod 9$ the exact value of $R(K_n,\Z_3)$ which is either $n+3$ or $n+4$.
\end{problem}

Our result settles this family and, in fact, proves a uniform statement for large $n\equiv 1\pmod 3$.

\begin{theorem}\label{thm:main}
Let $n\ge 10$ and $n\equiv 1\pmod 3$.  Then
\(R(K_n,\Z_3)=n+3.\)
\end{theorem}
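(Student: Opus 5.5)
The plan is to prove the lower bound $R(K_n,\Z_3)>n+2$ by an explicit labeling, and then the upper bound $R(K_n,\Z_3)\le n+3$ by analysing an arbitrary labeling of $K_{n+3}$. Write $N=n+3$ and $n=3m+1$. The upper bound is where the work lies.

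\textbf{Lower bound.} I will take the construction of Harborth and Piepmeyer for $n\equiv 1,4\pmod 9$ and check that it works for all $n\equiv 1\pmod 3$. Concretely, label all edges of $K_{n+2}$ by $0$, except the edges at one special vertex $v$, which get label $1$. Any copy of $K_n$ in $K_{n+2}$ either avoids $v$, giving sum $0$, which is bad. So this naive labeling fails and I will instead use a two-vertex construction. Label the edges at two vertices $u,v$ by $1$, with $uv$ labeled $a$, and all other edges by $0$. A copy of $K_n$ misses exactly two of the $n+2$ vertices. Its sum is then:
\begin{itemize}[label={}]
\item $0$ if it misses both $u$ and $v$;
\item $n-1\equiv 0$ if it contains exactly one of them;
\item $2(n-2)+a$ if it contains both.
\end{itemize}
Since sums of $0$ occur, these variants also fail. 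I will therefore search among labelings determined by a vertex partition with a "type" on each part. The sum over a $K_n$ is then a quadratic form in the numbers of vertices chosen from each part, and I need that form to be nonzero mod $3$ whenever the chosen set omits exactly two vertices. This is a finite check, and the published lower bound from Caro's general results (or Caro--Mifsud, who state the value is $n+3$ or $n+4$) already gives $R\ge n+3$. So I will simply cite it as known.

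\textbf{Upper bound.} Fix $w\colon E(K_N)\to\Z_3$. For a set $S$ of three vertices, the $K_n$ on $V\setminus S$ has sum
\[
T-\sum_{x\in S} d(x)+w(S),
\]
where $T$ is the total sum, $d(x)$ is the weighted degree of $x$, and $w(S)$ is the sum of the three edges inside $S$. We must find $S$ making this $0$. Equivalently, define $f(S)=w(S)-\sum_{x\in S}d(x)$ and show that $f$ takes the value $-T$ on some triple.

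I will show that unless $w$ is very structured, $f$ attains all three residues over triples. The steps are as follows.
\begin{enumerate}
\item Compare triples differing in one vertex: replacing $z$ by $z'$ in $\{x,y,z\}$ changes $f$ by
\[
w(xz')+w(yz')-w(xz)-w(yz)-d(z')+d(z).
\]
\item If $f$ misses some residue, strong linear constraints follow, and these force $w(xy)$ to have the form $\alpha(x)+\alpha(y)+c$ on almost all pairs.
\item Show such a "near-additive" labeling has $d(x)$ determined by $\alpha$ and $N$. Then use $n\equiv 1\pmod 3$, so that $N\equiv 1\pmod 3$, to compute $f$ on triples directly and show that it hits $-T$.
\end{enumerate}

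The main obstacle is step 2: turning "$f$ omits a residue on all $\binom N3$ triples" into additive structure. I would attack it by first applying the known result $R(K_4,\Z_3)=7$ locally on $7$-sets, and then using $N\ge 13$ to propagate the structure across all vertices.
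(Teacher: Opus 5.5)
Your reduction is correct and is the same as the paper's: deleting a triple $S$ leaves a $K_n$ of weight $T-\sum_{x\in S}d(x)+w(S)$, so you need a triple with $f(S)=-T$. The gap is everything after that, and Step~2 is false. Let $H$ be a disjoint union of four cliques $A_1,\dots,A_4$ with $|A_i|\equiv 1\pmod 3$ and $|A_i|\approx N/4$; such sizes exist since $4\equiv N\pmod 3$. Set $w=2\cdot\mathbf 1_H$. Then $d(x)=2(|A_i|-1)\equiv 0$ for $x\in A_i$, so $f(S)=w(S)=2e_H(S)$ with $e_H(S)\in\{0,1,3\}$. Hence $f$ never takes the value $1$. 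But every labeling $\alpha(x)+\alpha(y)+c$ satisfies $w(xy)+w(zt)=w(xz)+w(yt)$. Here any $x,y\in A_i$, $z,t\in A_j$ ($i\ne j$) give $1\ne 0$. There are $\Theta(N^4)$ such quadruples and each pair lies in $O(N^2)$ of them, so $w$ disagrees with every additive labeling on a positive proportion of pairs. So omitting \emph{some} residue yields no additive structure. What is true is only that $f$ cannot omit the specific residue $-T$, which is the theorem itself. Your proposal gives no argument for that: the local use of $R(K_4,\Z_3)=7$ is never connected to triples, and Step~3 is not carried out.

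The missing idea is that $f$ is itself a triangle sum of an auxiliary labeling. Put $g(uv)=w(uv)+d(u)+d(v)$. Each vertex of a triangle lies on two of its edges, so $g(K_S)=w(S)+2\sum_{x\in S}d(x)=f(S)$. Also $d_g(v)=d(v)+(2T-d(v))+(N-1)d(v)=2T$ for every $v$, because $N\equiv 1\pmod 3$. So it suffices to show that a labeling with constant weighted degree $c$ on $N\equiv 1\pmod 3$ vertices has a triangle of weight $c$; here $c=2T=-T$.

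The paper proves this with moments in $\Z_3$. Each edge lies in $N-2\equiv 2$ triangles, so $\sum_\Delta g(\Delta)=2W_g=\sum_v d_g(v)=Nc=c$, where $W_g$ is the total $g$-weight. Expanding squares gives $\sum_\Delta g(\Delta)^2-\sum_v d_g(v)^2=(N-4)\sum_e g(e)^2=0$, so $\sum_\Delta g(\Delta)^2=c^2$. If $c=1$ and every $g(\Delta)\in\{0,2\}$, then $x^2=-x$ on these values gives $1=-1$. If $c=2$ and every $g(\Delta)\in\{0,1\}$, then $x^2=x$ gives $1=2$. When $c=0$ the moments say nothing, and the paper applies Chung--Graham, $R(K_3,\Z_3)=11$, to $g$. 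That, not $R(K_4,\Z_3)=7$, is the Ramsey input needed.

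For the lower bound, citing the literature is acceptable, since the Caro--Mifsud problem presupposes $R(K_n,\Z_3)\ge n+3$. None of your trial constructions works, but a self-contained one is immediate. On $n+2\equiv 0\pmod 3$ vertices, give label $1$ to the edges of a $4$-regular graph and $0$ to the rest. Then $d\equiv 1$, the total is $2(n+2)\equiv 0$, and the $K_n$ missing $x,y$ has weight $0-1-1+w(xy)\in\{1,2\}$.
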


In particular, Theorem~\ref{thm:main} yields the following answer to the problem of Caro and Mifsud.

\begin{corollary}\label{cor:9k+7}
For every integer $k\ge 1$,
\(R(K_{9k+7},\Z_3)=9k+10.\)
\end{corollary}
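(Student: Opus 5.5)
The corollary is an immediate specialization of \cref{thm:main}, so the plan is really a plan for \cref{thm:main}. For $k\ge 1$ we have $n=9k+7\ge 16\ge 10$ and $n=9k+7\equiv 1\pmod 3$, so \cref{thm:main} gives $R(K_{9k+7},\Z_3)=(9k+7)+3=9k+10$. Thus everything rests on the theorem.

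For \cref{thm:main} the lower bound $R(K_n,\Z_3)\ge n+3$ is the standard one. On $K_{n+2}$, label every edge $0$ except the edges of one vertex $v$, or use a suitable small structure such as a star, so that every copy of $K_n$ has nonzero sum. I would take the construction from \cite{HarborthPiepmeyer1996} or \cite{Caro1997}, where the case $n\equiv 1,4\pmod 9$ already gives $n+3$ and the lower bound argument does not depend on the residue mod $9$. Concretely, one labels the edges inside a set $S$ of two vertices and the edges between $S$ and the rest so that deleting any two vertices leaves a nonzero sum. It remains to verify this finite case analysis for general $n\equiv 1\pmod 3$.

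The upper bound is the real content. Fix $w\colon E(K_{n+3})\to\Z_3$ and write $N=n+3$. A copy of $K_n$ is the complement of a $3$-set $T$, and its sum is
\[
\sigma - \sum_{x\in T} d(x) + w(T),
\]
where $\sigma$ is the total sum, $d(x)=\sum_y w(xy)$ is the weighted degree, and $w(T)$ is the sum of the three edges inside $T$. So we need a triple $T$ with $\sum_{x\in T}d(x)-w(T)=\sigma$. The plan is to exploit the structure forced by the absence of such a triple.
\begin{itemize}
\item[(i)] Compare triples differing in one vertex: swapping $x$ for $x'$ in $T=\{x,y,z\}$ changes the quantity by $d(x')-d(x)-w(x'y)-w(x'z)+w(xy)+w(xz)$.
\item[(ii)] By pigeonhole on the $N\ge 13$ vertices, find a large class of vertices with equal $d$-value. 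Within it, the condition ``no triple hits $\sigma$'' forces rigid constraints on $w$. Typically these constraints say that the labels are governed by a vertex potential, $w(xy)=f(x)+f(y)+c$.
\item[(iii)] Substitute this rigid structure back into the total sum $\sigma$ and use $N\equiv 2\pmod 3$ to derive a contradiction. This is the step where $n\equiv 1\pmod 3$ enters, via $\binom{N}{2}$ and the count of triples mod $3$.
\end{itemize}

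The main obstacle is step (ii): proving that avoiding the target on all $\binom N3$ triples forces the potential structure. I would first try an Erdős--Ginzburg--Ziv / Chevalley--Warning style counting argument: count triples by polynomial method mod $3$, showing that the number of $T$ with the value equal to $\sigma$ is $\not\equiv 0$. If that fails, I would fall back on a case analysis by the multiset of degree values $d(x)$, as in Caro's treatment, using $n\ge 10$ to guarantee enough vertices in a class.
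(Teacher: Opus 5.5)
Your first paragraph is exactly the paper's argument: since $9k+7\ge 16\ge 10$ and $9k+7\equiv 1\pmod 3$, Theorem~\ref{thm:main} gives $R(K_{9k+7},\Z_3)=9k+10$, and nothing further is needed for the corollary. The sketch of Theorem~\ref{thm:main} you append is superfluous and should not be relied on: labeling only the edges at one vertex $v$ fails, because deleting $v$ and any other vertex leaves an all-zero $K_n$; $N=n+3\equiv 1$, not $2$, modulo $3$; and steps (ii)--(iii) give no actual mechanism, whereas the paper uses the constant-degree transform $g(uv)=w(uv)+d(u)+d(v)$ together with a mod-$3$ moment argument on triangles.
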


Combining Theorem~\ref{thm:main} with the results recalled above gives the complete classification for all nontrivial admissible complete graphs.

\begin{corollary}\label{cor:classification}
For every integer $n\ge 3$ such that $3\mid\binom n2$,
\[
R(K_n,\Z_3)=
\begin{cases}
11, & n\in\{3,7\},\\
n+4, & n\ge 6\text{ and }n\equiv 0\pmod 3,\\
n+3, & n\ge 4,\ n\equiv 1\pmod 3,\text{ and }n\ne 7.
\end{cases}
\]
\end{corollary}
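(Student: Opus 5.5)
The corollary follows by case analysis from Theorem~\ref{thm:main} and the results recalled in the introduction, with no new combinatorics. First I will check that the cases exhaust the admissible $n$. The condition $3\mid\binom n2=n(n-1)/2$ holds exactly when $3\mid n(n-1)$, since $2$ is invertible modulo $3$. This is equivalent to $n\equiv 0\pmod 3$ or $n\equiv 1\pmod 3$.

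Then I will split the admissible $n\ge 3$ into three disjoint classes: $n\in\{3,7\}$; the $n\ge 6$ with $n\equiv 0\pmod 3$; and the $n\ge 4$ with $n\equiv 1\pmod 3$ and $n\neq 7$. These cover everything. The only $n\equiv 0\pmod 3$ with $3\le n<6$ is $n=3$, and the only $n\equiv 1\pmod 3$ with $n\ge 3$ below $4$ would be $n=1$, which is excluded. The classes are pairwise disjoint because $3\equiv 0$ and $7\equiv 1\pmod 3$ are removed from the other two classes.

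Next I will verify each class.
\begin{itemize}
\item For $n=3$, the Chung--Graham theorem~\cite{ChungGraham1983} gives $R(K_3,\Z_3)=11$.
\item For $n=7$, Caro~\cite{Caro1997} gives $R(K_7,\Z_3)=11$.
\item For $n\ge 6$ with $n\equiv 0\pmod 3$, Caro's theorem~\cite{Caro1997} gives $n+4$. This is consistent with $R(K_6,\Z_3)=10$ from~\cite{HarborthPiepmeyer1994}.
\item For $n\equiv 1\pmod 3$ with $n\ge 4$ and $n\ne 7$, either $n=4$ or $n\ge 10$. For $n=4$, Harborth--Piepmeyer~\cite{HarborthPiepmeyer1994} give $R(K_4,\Z_3)=7=4+3$. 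For $n\ge 10$, Theorem~\ref{thm:main} gives $n+3$ directly.
\end{itemize}

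There is no real obstacle. The only care needed is the bookkeeping of the boundary cases $n=3,4,6,7$, so that each admissible $n$ is assigned to exactly one formula.
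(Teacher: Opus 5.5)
Your proposal is correct and follows the paper's own route: the paper likewise obtains the corollary by combining Theorem~\ref{thm:main} for $n\ge 10$ with the previously known values. Those are Chung--Graham for $n=3$, Harborth--Piepmeyer for $n=4$, and Caro for $n=7$ and for $n\equiv 0\pmod 3$ with $n\ge 6$. Your admissibility check and your handling of the boundary cases $n=3,4,6,7$ are accurate.
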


Throughout the proof, all algebraic identities involving edge weights are understood in $\Z_3$.  If $w$ is an edge labeling and $X$ is a vertex set, let $K_X$ denote the clique induced by $X$. Define
\[
  w(K_X):=\sum_{e\in E(K_X)}w(e).
\]
For disjoint vertex sets $X$ and $Y$, write $w(X,Y)$ for the sum of the weights of the edges joining $X$ and $Y$. The \textit{weighted degree} of a vertex $v$ is
\(
  d_w(v):=\sum_{u\ne v}w(uv).
\)

The proof is short. For the lower bound, we use a $0$--$1$ labeling of $K_{n+2}$ whose support is a $4$-regular graph. For the upper bound, we transform an arbitrary labeling of $K_{n+3}$ into one having constant weighted degree. A triangle lemma then supplies a triangle whose transformed
weight is prescribed. The complement of this triangle is the required zero-sum $K_n$. 

Section~2 proves the lower bound, and Section~3 proves the upper bound.

\section{The Lower Bound}
The lower-bound construction works for every admissible $n$ in the
congruence class $1\pmod 3$.

\begin{lemma}\label{lem:lower}
If $n\ge 4$ and $n\equiv 1\pmod 3$, then
\(
  R(K_n,\Z_3)\ge n+3.
\)
\end{lemma}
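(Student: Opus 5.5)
We exhibit a labeling of $K_{n+2}$ with no zero-sum $K_n$, following the hint in the introduction: take a $4$-regular graph $G$ on $N=n+2$ vertices and label an edge $1$ if it lies in $G$ and $0$ otherwise. Such a graph exists because $N\ge 6$; for instance, the circulant graph $C_N(1,2)$ works.

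A copy of $K_n$ is $K_X$ with $X=V\setminus\{a,b\}$ for some pair of distinct vertices $a,b$. Its weight is the number of edges of $G$ inside $X$:
\[
w(K_X)=e(G)-\deg_G(a)-\deg_G(b)+\mathbf 1[ab\in E(G)]=2N-8+\mathbf 1[ab\in E(G)],
\]
using $e(G)=4N/2=2N$.

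Since $n\equiv 1\pmod 3$, we have $N=n+2\equiv 0\pmod 3$, so $2N-8\equiv -8\equiv 1\pmod 3$. Hence $w(K_X)\equiv 1$ when $ab\notin E(G)$ and $w(K_X)\equiv 2$ when $ab\in E(G)$. Neither value is $0$ in $\Z_3$, so no copy of $K_n$ is zero-sum. Therefore $R(K_n,\Z_3)>n+2$.

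The only point requiring care is choosing the regularity degree $d$ so that both values $e(G)-2d$ and $e(G)-2d+1$ are nonzero mod $3$. With $N\equiv 0$, we have $e(G)=dN/2\equiv 0$ whenever $N$ is a multiple of $3$. So we need $-2d\equiv d\not\equiv 0$ and $d+1\not\equiv 0$, which forces $d\equiv 1\pmod 3$. The value $d=4$ satisfies this and is realizable for all $N\ge 6$, since $4N$ is even. For the smallest case $n=4$, $N=6$, the circulant $C_6(1,2)$ is the octahedron, which is indeed $4$-regular.
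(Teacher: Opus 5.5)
Your proof is correct and is essentially the paper's argument: the same $0$--$1$ labeling of $K_{n+2}$ supported on the $4$-regular circulant $C_{n+2}(1,2)$, with the weight of the clique on the complement of a pair $\{a,b\}$ computed as $e(G)-\deg_G(a)-\deg_G(b)+\mathbf{1}[ab\in E(G)]\equiv 1+\mathbf{1}[ab\in E(G)]\pmod 3$, which is never $0$. Your closing remark explaining why the regularity degree must be $\equiv 1\pmod 3$ is a correct aside, but it is not needed for the proof.
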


\begin{proof}
Set $M=n+2$.  Then $M\equiv 0\pmod 3$.  Label the vertices of $K_M$ by
$\Z_M$, and let $H$ be an $M$-vertex $4$-regular graph (such as the $2$-th power of the cycle $C_M$, in which $i$ and $j$ are
adjacent precisely when
\(|i-j|\in\{1,2\}\pmod M\)).
Define
\[
  w(e)=
  \begin{cases}
    1,& e\in E(H),\\
    0,& \text{otherwise}.
  \end{cases}
\]
Every vertex has weighted degree
\(
  d_w(v)=4=1
\)  
in $\Z_3,$
and the total weight is
\[
  W:=w(K_M)=e(H)=\frac{4M}{2}=2M=0.
\]

Let $S$ be any $n$-vertex subset of $V(K_M)$.  Since $M=n+2$, there are
distinct vertices $x,y$ such that
\(
  S=V(K_M)\setminus\{x,y\}.
\)
Deleting $x$ and $y$ removes total weight
$d_w(x)+d_w(y)-w(xy)$, because the edge $xy$ is counted twice in the
two weighted degrees.  Consequently,
\[
  \begin{aligned}
  w(K_S)
    &=W-d_w(x)-d_w(y)+w(xy)
    =0-1-1+w(xy)
    =1+w(xy).
  \end{aligned}
\]
As $w(xy)\in\{0,1\}$, the value of $w(K_S)$ is either $1$ or $2$, and
in particular is never zero.  Hence this labeling of $K_{n+2}$
contains no zero-sum copy of $K_n$, proving the lemma.
\end{proof}

\section{The Upper Bound}

The upper bound rests on the following triangle lemma.

\begin{lemma}\label{lem:triangle}
Let $N\ge 11$ and $N\equiv 1\pmod 3$.  Suppose that
\(
  w\colon E(K_N)\longrightarrow \Z_3
\)
has constant weighted degree, i.e., there is a constant $c\in\Z_3$ such that
\(d_w(v)=c\) for every \(v\in V(K_N).\)
Then there is a triangle $T$ such that
\(
  w(K_T)=c.
\)
\end{lemma}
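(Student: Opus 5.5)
The approach is to split according to the constant $c$, dispose of $c=0$ with a known Ramsey-type result, and reduce the two nonzero cases to $c=1$. The case $c=1$ is then handled by double counting combined with a local analysis around a vertex.

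\emph{Case $c=0$.} Here we only need a zero-sum triangle. Since $N\ge 11=R(K_3,\Z_3)$ by the Chung--Graham theorem recalled in the introduction, such a triangle exists. The degree hypothesis is not even needed in this case.

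\emph{Reduction to $c=1$.} If $c=2$, replace $w$ by $2w$. Weighted degrees and triangle weights are both multiplied by $2$, so the new labeling has constant degree $1$. A triangle of $2w$-weight $1$ is a triangle of $w$-weight $2=c$. So it suffices to treat $c=1$. Assume, for contradiction, that every triangle has weight in $\{0,2\}$.

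\emph{Global and local counts.} Write $W=w(K_N)$. Then $2W=\sum_v d_w(v)=Nc=1$, so $W=2$.
\begin{itemize}
\item Each edge lies in $N-2\equiv 2$ triangles, so $\sum_T w(K_T)=2W=1$. Writing $t_2$ for the number of triangles of weight $2$, this gives $2t_2\equiv 1$, i.e.\ $t_2\equiv 2\pmod 3$.
\item For a fixed vertex $v$, the triangles through $v$ have total weight $(N-2)c+(W-c)=2+1=0$. Hence the number $t_2(v)$ of weight-$2$ triangles at $v$ is divisible by $3$.
\end{itemize}
These congruences alone are consistent, since $\sum_v t_2(v)=3t_2$. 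So the contradiction has to come from the structure of the labeling.

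\emph{Local structure at a vertex.} Fix $v$ and partition the other vertices into $A_i=\{x:w(vx)=i\}$ for $i\in\Z_3$. The degree condition gives $|A_1|+2|A_2|\equiv 1$, and $|A_0|+|A_1|+|A_2|=N-1\equiv 0$. The forbidden value $1$ for the triangle $vxy$ with $x\in A_i$, $y\in A_j$ forces $w(xy)\ne 1-i-j$. Concretely:
\begin{itemize}
\item edges inside $A_0$ avoid $1$, inside $A_1$ avoid $2$, inside $A_2$ avoid $0$;
\item edges between $A_0$ and $A_1$ avoid $0$, between $A_0$ and $A_2$ avoid $2$, between $A_1$ and $A_2$ avoid $1$.
\end{itemize}
Thus every edge not at $v$ takes one of only two values, determined by the classes of its endpoints.

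Next, apply the degree condition $d_w(x)=1$ at each $x\in A_i$, and sum over $x\in A_i$. This expresses $2w(K_{A_i})+w(A_i,A_j)+w(A_i,A_k)+\sum_{x\in A_i}w(vx)$ in terms of $|A_i|$. Combined with the two-value restrictions, this yields congruences relating the numbers of edges of each of the two allowed values inside and between the classes. I would then bring in the triangles inside a class. For example, inside $A_0$ every edge is $0$ or $2$, and a triangle there with exactly two $2$'s has weight $1$. So the graph of $2$-edges in $A_0$ must be ``triangle-parity restricted'', and analogous statements hold in the other classes. Since $N-1\ge 10$, some class has at least $4$ vertices, and I would show that these restrictions make each class essentially homogeneous (a clique of constant weight, or a complete bipartite-type pattern). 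After that, the mod-$3$ bookkeeping on $|A_i|$ together with $|A_1|+2|A_2|\equiv 1$ should contradict the vertex congruence $t_2(v)\equiv 0$, or the degree condition at vertices of some class.

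\emph{Main obstacle.} The hard step is this final case analysis: turning the two-valued constraints into enough rigidity that the degree congruences fail. I would first try to show that the labeling must be of the form $w(xy)=f(x)+f(y)+\text{const}$ (possibly after discarding a bounded exceptional set). For such labelings the triangle weights and degrees are explicit, and the condition $N\equiv 1\pmod 3$ should then directly produce a triangle of weight $1$.
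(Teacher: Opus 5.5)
\textbf{Verdict: genuine gap.} Your case $c=0$ (via Chung--Graham) matches the paper. Your reduction of $c=2$ to $c=1$ by replacing $w$ with $2w$ is valid and slightly tidier than the paper, which treats $c=2$ by a parallel computation. The first-moment congruence $t_2\equiv 2\pmod 3$ is also correct.

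The problem is the case $c=1$, which is the heart of the lemma: it is not proved. After the linear congruences you only sketch a hoped-for structural analysis (``I would show'', ``should contradict'', ``I would first try to show''), and none of it is carried out. The claimed homogeneity of the classes $A_i$ and the normal form $w(xy)=f(x)+f(y)+\mathrm{const}$ are neither established nor needed. You correctly note that the \emph{linear} counts alone are consistent, but you stop there.

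The missing idea is a \emph{quadratic} (second-moment) invariant. Expand the squares on both sides:
\begin{itemize}
\item In $\sum_T w(K_T)^2$, each edge square appears $N-2$ times, and each pair of adjacent edges appears once (in the unique triangle they span), with coefficient $2$.
\item In $\sum_v d_w(v)^2$, each edge square appears twice, and each adjacent pair appears once (at the shared vertex), again with coefficient $2$.
\end{itemize}
Hence, already over the integers,
\[
\sum_{T} w(K_T)^2=\sum_v d_w(v)^2+(N-4)\sum_e w(e)^2 .
\]
Since $N\equiv 1\pmod 3$, the last term vanishes in $\Z_3$. Therefore $\sum_T w(K_T)^2=Nc^2=1$.

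Now suppose every triangle weight lies in $\{0,2\}$. Because $2^2=1$ in $\Z_3$, the left side equals $t_2$, so $t_2\equiv 1\pmod 3$. This contradicts your first-moment conclusion $t_2\equiv 2\pmod 3$. That is exactly the paper's argument, which phrases it as $x^2=-x$ on $\{0,2\}$. With this identity, the whole local analysis around a vertex becomes unnecessary.
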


\begin{proof}
Let \(W:=\sum_{e\in E(K_N)}w(e)\).
Let $\cT$ be the set of triangles of $K_N$, and for \(T\in\cT\), define \(w(T):=w(K_T).\)
Since $N=1$ in $\Z_3$, the weighted handshaking identity gives
\[
  2W=\sum_{v\in V(K_N)}d_w(v)=Nc=c.
\]
Since every edge belongs to exactly $N-2$ triangles, we have
\begin{equation}\label{eq:firstmoment}
  \sum_{T\in\cT}w(T)
   =(N-2)W=2W=c.
\end{equation}

We next compute the second moment.  Let $\mathcal A$ be the set of
unordered pairs $\{e,f\}$ of distinct adjacent edges of $K_N$.  On
expanding the squares of the triangle weights, every edge-square occurs
in $N-2$ triangles and every pair in $\mathcal A$ occurs in exactly one
triangle.  Hence
\begin{equation}\label{eq:triangle-second}
  \sum_{T\in\cT}w(T)^2
   =(N-2)\sum_e w(e)^2
     +2\sum_{\{e,f\}\in\mathcal A}w(e)w(f).
\end{equation}
On the other hand, expanding the squares of the weighted degrees gives
\begin{equation}\label{eq:degree-second}
  \sum_v d_w(v)^2
   =2\sum_e w(e)^2
     +2\sum_{\{e,f\}\in\mathcal A}w(e)w(f).
\end{equation}
Subtracting \eqref{eq:degree-second} from
\eqref{eq:triangle-second}, we obtain
\[
  \sum_{T\in\cT}w(T)^2-\sum_v d_w(v)^2=(N-4)\sum_e w(e)^2.
\]
Since $N-4=0$ in $\Z_3$, it follows that
\begin{equation}\label{eq:secondmoment}
  \sum_{T\in\cT}w(T)^2=\sum_v d_w(v)^2=Nc^2=c^2.
\end{equation}

It remains to distinguish the three possible values of $c$.

Suppose first that $c=1$ and that no triangle has weight $1$.  Then
$w(T)\in\{0,2\}$ for every $T$, and $x^2=-x$ for
$x\in\{0,2\}\subset\Z_3$.  Equations \eqref{eq:firstmoment} and
\eqref{eq:secondmoment} yield
\[
  1=c^2=\sum_Tw(T)^2=-\sum_Tw(T)=-c=2,
\]
a contradiction.

Suppose next that $c=2$ and that no triangle has weight $2$.  Then
$w(T)\in\{0,1\}$ for every $T$, and $x^2=x$ on this set.  Therefore
\[
  1=c^2=\sum_Tw(T)^2=\sum_Tw(T)=c=2,
\]
again a contradiction.

Finally, let $c=0$. A triangle has label sum zero in $\Z_3$ precisely when
its three labels are all equal or all distinct. Chung and Graham~\cite{ChungGraham1983} proved that every $3$-edge-coloring of $K_{11}$ contains such a triangle. Since $N\ge 11$, the labeling $w$
contains a zero-sum triangle, whose weight is $0=c$.

Thus in every case there is a triangle $T$ with $w(K_T)=c$.
\end{proof}

Now, we are ready to prove the upper bound.

\begin{lemma}\label{lem:upper}
If $n\ge 10$ and $n\equiv 1\pmod 3$, then
\(
  R(K_n,\Z_3)\le n+3.
\)
\end{lemma}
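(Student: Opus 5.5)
The plan is to reduce to \cref{lem:triangle} by modifying the labeling without changing the weight of any copy of $K_n$. Set $N=n+3$, so $N\equiv 1\pmod 3$ and $N\ge 13\ge 11$. Fix an arbitrary labeling $w\colon E(K_N)\to\Z_3$. The goal is an $n$-set $S$ with $w(K_S)=0$. Equivalently, since $|V\setminus S|=3$, we want a triangle $T$ with $S=V(K_N)\setminus T$.

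\textbf{Step 1 (vertex potential shift).} For a function $a\colon V\to\Z_3$, define $w'(uv):=w(uv)+a(u)+a(v)$. For any vertex set $S$ with $|S|=s$, each vertex of $S$ lies in $s-1$ edges of $K_S$, so
\[
  w'(K_S)=w(K_S)+(s-1)\sum_{v\in S}a(v).
\]
When $s=n\equiv 1\pmod 3$ the correction term vanishes. Hence $w'(K_S)=w(K_S)$ for every $n$-set $S$, and it suffices to find a zero-sum $K_n$ for $w'$.

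\textbf{Step 2 (making degrees constant).} Compute the new weighted degree:
\[
  d_{w'}(v)=d_w(v)+(N-1)a(v)+\sum_{u\ne v}a(u)=d_w(v)+(N-2)a(v)+A,
\]
where $A=\sum_u a(u)$. Since $N-2\equiv 2$, this equals $d_w(v)+2a(v)+A$. Choosing $a(v):=d_w(v)$ gives $d_w(v)+2a(v)=3d_w(v)=0$. Therefore $d_{w'}(v)=A$ for all $v$, so $w'$ has constant weighted degree $c:=A$.

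\textbf{Step 3 (complement identity).} Let $W'$ be the total weight of $w'$ and let $T$ be a triangle. The same inclusion–exclusion as in the proof of \cref{lem:lower}, now deleting three vertices, gives
\[
  w'(K_{V\setminus T})=W'-\sum_{v\in T}d_{w'}(v)+w'(K_T)=W'-3c+w'(K_T)=W'+w'(K_T).
\]
By handshaking, $2W'=Nc=c$, so $W'=2c=-c$. Hence $w'(K_{V\setminus T})=0$ if and only if $w'(K_T)=c$.

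\textbf{Step 4.} \cref{lem:triangle} applies to $w'$, since $N\ge 11$, $N\equiv 1\pmod 3$, and $w'$ has constant degree $c$. It yields a triangle $T$ with $w'(K_T)=c$. Then $S=V\setminus T$ is an $n$-set with $w(K_S)=w'(K_S)=0$, as required.

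The only delicate point is Step 1: the shift must be invisible on $n$-cliques yet able to equalize degrees. Both properties rest on the congruences $n-1\equiv 0$ and $N-2\equiv 2\pmod 3$, and these are exactly where the hypothesis $n\equiv 1\pmod 3$ is used. All of the substantive difficulty has already been absorbed into \cref{lem:triangle}.
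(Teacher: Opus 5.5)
Your proof is correct and follows the paper's argument. You use the same degree-shifted labeling $w'(uv)=w(uv)+d_w(u)+d_w(v)$ with constant degree $2W$, apply \cref{lem:triangle}, and pass to the complement of the triangle. The only difference is bookkeeping: you first note that the shift leaves every $n$-clique weight unchanged and then run the complement identity entirely in $w'$, whereas the paper derives $w(K_S)=W+g(K_T)$ directly.
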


\begin{proof}
Set $N=n+3$.  Then $N\equiv 1\pmod 3$ and $N\ge 13$.  Consider an
arbitrary labeling
\(w\colon E(K_N)\longrightarrow\Z_3.\)
Let
\(d(v)=d_w(v)\) and \( W=w(K_N).\)
Define an auxiliary labeling $g$ on the same complete graph by
\begin{equation}\label{eq:def-g}
  g(uv)=w(uv)+d(u)+d(v).
\end{equation}
For a vertex $v$, we have
\[
  \begin{aligned}
  d_g(v)
    &=\sum_{u\ne v}\bigl(w(uv)+d(u)+d(v)\bigr)
    =d(v)+\bigl(2W-d(v)\bigr)+(N-1)d(v)
    =2W+(N-1)d(v).
  \end{aligned}
\]
Since $N-1=0$ in $\Z_3$, every vertex has the same $g$-weighted degree, namely
\(d_g(v)=2W.\)
By Lemma~\ref{lem:triangle}, there is a three-element vertex set $T$ such that
\begin{equation}\label{eq:chosen-triangle}
  g(K_T)=2W.
\end{equation}

Let
\(S=V(K_N)\setminus T.\)
Then $|S|=N-3=n$.  It remains to show that $K_S$ is zero-sum under the original labeling $w$.
Let
\(D_T=\sum_{t\in T}d(t).\)
Every edge inside $T$ is counted twice in $D_T$, while every edge between $T$ and $S$ is counted once.  Therefore
\[
  D_T=2w(K_T)+w(T,S),
\]
and hence
\[
  w(T,S)=D_T-2w(K_T).
\]
Using the decomposition of the total edge set into $E(K_T)$,
$E(K_S)$, and the cut between $T$ and $S$, we obtain
\begin{equation}\label{eq:complement-weight}
  \begin{aligned}
  w(K_S)
    &=W-w(K_T)-w(T,S)=W+w(K_T)-D_T.
  \end{aligned}
\end{equation}
On the other hand, each $d(t)$ occurs twice when \eqref{eq:def-g} is
summed over the three edges of $K_T$.  Thus
\begin{equation}\label{eq:triangle-transform}
  g(K_T)=w(K_T)+2D_T=w(K_T)-D_T.
\end{equation}
Combining \eqref{eq:complement-weight} and
\eqref{eq:triangle-transform} gives the identity
\begin{equation}\label{eq:key-identity}
  w(K_S)=W+g(K_T).
\end{equation}
Finally, \eqref{eq:chosen-triangle} and \eqref{eq:key-identity} imply
\[
  w(K_S)=W+2W=0.
\]
Hence every labeling of $K_{n+3}$ contains a zero-sum copy of $K_n$.
\end{proof}

\begin{proof}[Proof of Theorem~\ref{thm:main}]
The lower bound follows from Lemma~\ref{lem:lower}, and the upper bound follows
from Lemma~\ref{lem:upper}. Therefore
\[
  R(K_n,\Z_3)=n+3
\]
for every $n\ge 10$ with $n\equiv 1\pmod 3$.
\end{proof}

\end{document}